\theoremstyle{plain}
\newtheorem*{theorem*}{Theorem}
\newtheorem{theorem}{Theorem}[section]
\newtheorem{lemma}[theorem]{Lemma}
\newtheorem*{claim*}{Claim}
\newtheorem{corollary}[theorem]{Corollary}
\newtheorem{conjecture}[theorem]{Conjecture}
\newtheorem{problem}[theorem]{Problem}
\theoremstyle{remark}
\def\N{\mathbb{N}}
\def\C{\mathcal}
\let\emptyset\varnothing
\let\originalleft\left
\let\originalright\right
\renewcommand{\left}{\mathopen{}\mathclose\bgroup\originalleft}
\renewcommand{\right}{\aftergroup\egroup\originalright}
\def\imod#1{\allowbreak\mkern10mu({\operator@font mod}\,\,#1)}
\begin{document}

\title{Exactly $m$-coloured complete infinite subgraphs}

\author{Bhargav Narayanan}
\address{Department of Pure Mathematics and Mathematical Statistics, University of Cambridge, Wilberforce Road, Cambridge CB3\thinspace0WB, UK}
\email{b.p.narayanan@dpmms.cam.ac.uk}

\date{22 February 2013}
\subjclass[2010]{Primary 05D10; Secondary 05C63}

\begin{abstract}
Given an edge colouring of a graph with a set of $m$ colours, we say that the graph is (\emph{exactly}) \emph{$m$-coloured} if each of the colours is used. The question of finding exactly $m$-coloured complete subgraphs was first considered by Erickson in 1994; in 1999, Stacey and Weidl partially settled a conjecture made by Erickson and raised some further questions. In this paper, we shall study, for a colouring of the edges of the complete graph on $\N$ with exactly $k$ colours, how small the set of natural numbers $m$ for which there exists an $m$-coloured complete infinite subgraph can be. We prove that this set must have size at least $\sqrt{2k}$; this bound is tight for infinitely many values of $k$. We also obtain a version of this result for colourings that use infinitely many colours.
\end{abstract}

\maketitle

\section{Introduction}
A classical result of Ramsey~\citep{Ramsey1930} says that when the edges of a complete graph on a countably infinite vertex set are finitely coloured, one can always find a complete infinite subgraph all of whose edges have the same colour. 

Ramsey's Theorem has since been generalised in many ways; most of these generalisations are concerned with finding other monochromatic structures. For a survey of many of these generalisations, see the book of Graham, Rothschild and Spencer~\citep{Graham1990}. Ramsey theory has witnessed many developments over the last fifty years and continues to be an area of active research today; see~\citep{Leader2012, Hindman2008, Thomason1988, Conlon2009}, for instance. 

Alternatively, anti-Ramsey theory, which originates in a paper of Erd{\H{o}}s,  Simonovits and S{\'o}s~\cite{Erdos1975}, is concerned with finding large `rainbow coloured' or `totally multicoloured' structures. Between these two ends of the spectrum, one could consider the question of finding structures which are coloured with exactly $m$ different colours as was first done by Erickson~\citep{Erickson1994}; it is this line of enquiry that we pursue here.

\section{Our results}
For a set $X$, denote by $X^{(2)}$ the set of all unordered pairs of elements of $X$; equivalently, $X^{(2)}$ is the complete graph on the vertex set $X$. As usual, we write $[n]$ for $\{1,\dots,n\}$, the set of the first $n$ natural numbers. We denote a surjective map $f$ from a set $X$ to another set $Y$ by $f: X \twoheadrightarrow Y$. By a \emph{colouring} of a graph, we mean a colouring of the edges of the graph unless we specify otherwise.

Let $\Delta:\N^{(2)}\twoheadrightarrow \C{C}$ be a surjective colouring of the edges of the complete graph on $\N$ with an arbitrary set of colours $\C{C}$. If the set of colours $\C{C}$ is infinite, we say that $\Delta$ is an \emph{infinite-colouring} and if $\C{C}$ is finite, we say that $\Delta$ is a \emph{$k$-colouring} if $|\C{C}|=k$.

Given a colouring $\Delta:\N^{(2)}\twoheadrightarrow \C{C}$ of the complete graph on $\N$, we say that a subset $X$ of $\N$ is (\emph{exactly}) $m$-\emph{coloured} if $\Delta(X^{(2)})$, the set of values attained by $\Delta$ on the edges with both endpoints in $X$, has size exactly $m$. Let $\gamma_{\Delta}(X)$, or $\gamma(X)$ in short, denote the size of the set $\Delta( X^{(2)} )$; in other words, every set $X$ is $\gamma(X)$-coloured. Our aim in this paper is to study the set 
\[
\C{F}_{\Delta} = \left\{ \gamma_\Delta(X)  :  X\subset\N \mbox{ such that } X \mbox{ is infinite}\right\} .
\]

We first consider colourings using finitely many colours. Let $\Delta:\N^{(2)}\twoheadrightarrow [k]$ be a $k$-colouring of the edges of the complete graph on the natural numbers with $k \ge 2$ colours. Trivially, $k\in\C{F}_{\Delta}$ since $\Delta$ is surjective, and Ramsey's Theorem tells us that $1\in\C{F}_{\Delta}$.  Furthermore, as was noted by Erickson~\citep{Erickson1994}, a fairly straightforward application of Ramsey's Theorem enables one to show that $2\in\C{F}_{\Delta}$ for any $k$-colouring $\Delta$ with $k\ge 2$. Erickson conjectured however that with the exception of $1$, $2$ and $k$, no other elements are guaranteed to be in $\C{F}_{\Delta}$. 

\begin{conjecture}
\label{mcol12-erickson}
Let $k,m \in \N$ with $k>m>2$. Then there exists a $k$-colouring $\Delta:\N^{(2)}\twoheadrightarrow [k]$ such that $m\notin\C{F}_{\Delta}$.
\end{conjecture}

Stacey and Weidl~\citep{Stacey1999} settled this conjecture in the case where $k$ is much bigger than $m$. More precisely, for any $m>2$, they showed that there exists a constant $C_{m}$ such that if $k>C_{m}$, then there is a $k$-colouring $\Delta$ such that $m\notin\C{F}_{\Delta}$.

Erickson's conjecture, if true, would suggest that it is hopeless to look for particular values in the set $\C{F}_{\Delta}$ given a $k$-colouring $\Delta:\N^{(2)}\twoheadrightarrow [k]$. It is natural then to consider other properties of the set $\C{F}_{\Delta}$. The first question which arises is that of the set of possible sizes of $\C{F}_{\Delta}$. Since $\C{F}_{\Delta}\subset[k]$, it follows that $|\C{F}_{\Delta}|\le k$ and it is easy to see that equality is in fact possible. Things are not so clear when we turn to the question of lower bounds. Let us define 
\[
\psi(k) = \min_{\Delta:\N^{(2)}\twoheadrightarrow[k]}|\C{F}_{\Delta}|.	
\]
We are able to prove the following lower bound for $\psi(k)$.

\begin{theorem}
\label{mcol12-mainresult}
Let $n \ge 2$ be the largest natural number such that $k\ge\binom{n}{2}+1$. Then $\psi(k)\ge n$.
\end{theorem}

It is not hard to check that Theorem~\ref{mcol12-mainresult} is tight when $k=\binom{n}{2}+1$ for some $n\ge2$. To this end, we consider the `small-rainbow colouring' $\Delta$ which colours all the edges with both endpoints in $[n]$ with $\binom{n}{2}$ distinct colours and all the remaining edges with the one colour that has not been used so far. Clearly, $\C{F}_{\Delta}= \{ \binom{i}{2}+1 : i \le n\}$, so Theorem~\ref{mcol12-mainresult} is best-possible for infinitely many values of $k$.

Turning to the question of upper bounds for $\psi$, the small-rainbow colouring demonstrates that $\psi (k) = O(\sqrt{k})$ for infinitely many values of $k$. When $k$ is not of the form $\binom{n}{2}+1$, there are two obvious ways of generalising the small-rainbow colouring described above: we could replace the rainbow coloured clique in the construction either with a disjoint union of cliques, or with a clique along with an extra vertex attached to some vertices of the clique. It is not hard to check that both these generalisations fail to give us good upper bounds for $\psi (k)$ for general $k$; in particular, we are unable to decide if $\psi(k)=o(k)$ for all $k \in \N$. However, by considering colourings that colour all the edges of a small complete bipartite graph with distinct colours (as opposed to a small complete graph) and making use of some number theoretic estimates of Tenenbaum~\citep{Tenenbaum1984} and Ford~\citep{Ford2008}, we get reasonably close to such a statement.

\begin{theorem}
\label{mcol12-upperbound}
There exists a subset $A$ of the natural numbers of asymptotic density one such that for all $k\in A$, 
\[\psi(k) = O \left(\frac{k}{(\log{\log{k}})^{\delta}(\log{\log{\log{k}}})^{3/2}}\right),\]  
where $\delta = 1 - \frac{1 + \log{\log{2}} }{\log{2}} \approx 0.086 > 0$.
\end{theorem}

In the spirit of canonical Ramsey theory, which originates in a paper of Erd\H{o}s and Rado~\citep{Erdos1950}, we also study colourings using infinitely many colours. When $\Delta$ is an infinite-colouring, then it might so happen (when $\Delta$ is injective, for instance) that for each infinite subset $X$ of $\N$, the set $\Delta(X^{(2)})$ is infinite; consequently, our search for infinite $m$-coloured subsets is doomed to fail in this case. So given a colouring $\Delta:\N^{(2)}\twoheadrightarrow \C{C}$, we define
\[
\C{G}_{\Delta}=\left\{ \gamma_\Delta(X)  :  X\subset \N \right\} .
\]
The difference between $\C{G}_{\Delta}$ and $\C{F}_{\Delta}$ is that we also consider finite complete subgraphs in defining $\C{G}_{\Delta}$. We can prove the following analogue of Theorem~\ref{mcol12-mainresult} for infinite-colourings.

\begin{theorem}
\label{mcol12-canonical}
Let $\Delta:\N^{(2)}\twoheadrightarrow\N$ be an infinite-colouring and suppose $n\ge2$ is a natural number. Then $|\C{G}_{\Delta}\cap[\binom{n}{2}]|\ge n-1$.
\end{theorem}

By considering the injective colouring that colours each edge with a distinct colour, it is easy to see that Theorem~\ref{mcol12-canonical} is best-possible.

The rest of this paper is organised as follows. In the next section, we prove our lower bounds, namely Theorems~\ref{mcol12-mainresult} and~\ref{mcol12-canonical}. We remark that we do not prove Theorem~\ref{mcol12-mainresult} and~\ref{mcol12-canonical} as stated. Instead, we prove two stronger structural results that in turn imply these theorems. We postpone the statements of these results since they depend on a certain notion of homogeneity that we shall introduce in the next section. In Section~\ref{mcol12-s:upper}, we describe how Theorem~\ref{mcol12-upperbound} follows from certain divisor estimates. We conclude by mentioning some open problems in Section~\ref{mcol12-s:conc}.

\section{Lower bounds}\label{mcol12-s:low}

In this section, we prove Theorem~\ref{mcol12-mainresult} by proving a stronger structural result, namely Theorem~\ref{mcol12-nhomog}. The proof of Theorem~\ref{mcol12-canonical} via Theorem~\ref{mcol12-nwhomog} is very similar and we shall only highlight the main differences in the proofs. 

We first introduce a notational convenience. Given a colouring $\Delta$ of $\N^{(2)}$, a vertex $v\in\N$, and a subset $X\subset\N\setminus\{v\}$, we say that a colour $c$ is a \emph{new colour from} $v$ \emph{into} $X$ if some edge from $v$ to $X$ is coloured $c$ by $\Delta$ and also, no edge of $X^{(2)}$ is coloured $c$ by $\Delta$. We write $N_{\Delta}(v,X)$, or just $N(v,X)$ when the colouring $\Delta$ in question is clear, for the set of new colours from $v$ into $X$.

\subsection{Proof of Theorem~\ref{mcol12-mainresult}}

Before we prove Theorem~\ref{mcol12-mainresult}, we note that Erickson's argument showing that $2\in\C{F}_{\Delta}$ can be generalised to give a quick proof of the fact that $\psi(k)=\Omega(\log k)$.

\begin{lemma}
\label{mcol12-erickson-lem}
Let $\Delta:\N^{(2)}\twoheadrightarrow[k]$ be a $k$-colouring and suppose $l\in\C{F}_{\Delta}$ and $l<k$. Then there is an $m\in\C{F}_{\Delta}$ such that $l+1\le m\le2l$.
\end{lemma}

Note that Lemma~\ref{mcol12-erickson-lem}, coupled with the fact that we always have $1\in\C{F}_{\Delta}$, implies that $\psi(k)\ge1+\log_{2}k$.

\begin{proof}[Proof of Lemma~\ref{mcol12-erickson-lem}]
Let $X\subset\N$ be a maximal $l$-coloured set. As $l<k$, $X\neq\N$. Pick $v\in\N\setminus X$. Note that $N(v,X)\neq\emptyset$ since otherwise $X\cup\{v\}$ is $l$-coloured, which contradicts the maximality of $X$. 

If $|N(v,X)|\le l$, then $X\cup\{v\}$ is $m$-coloured for some $l+1\le m\le2l$. So suppose $|N(v,X)|\ge l+1$. By the pigeonhole principle, there is an infinite subset $Y$ of $X$ such that all the vertices of $Y$ are connected to $v$ by edges of a single colour, say $c$.

We consider two cases. If $c\in N(v,X)$, we pick $l-1$ vertices from $X$ which are joined to $v$ by edges coloured with $l-1$ distinct colours from $N(v,X)\setminus\{c\}$. If on the other hand $c\notin N(v,X)$, we pick $l$ vertices from $X$ which are joined to $v$ by edges coloured with $l$ distinct colours from $N(v,X)$. Call this set of $l-1$ or $l$ vertices $Z$. 

In both cases, it is easy to check that $Y\cup Z\cup\{v\}$ is $m$-coloured with $l+1\le m\le2l$.
\end{proof}

Consequently,  we have the following corollary.

\begin{corollary}
\label{mcol12-powersof2}
If $\Delta:\N^{(2)}\twoheadrightarrow[k]$ is a $k$-colouring and $n$ is a natural number such that $k\ge2^{n}+1$, then $\C{F}_{\Delta}\cap([2^{n+1}]\setminus[2^{n}])\neq\emptyset.$ \qed
\end{corollary}

We shall show that for any $k$-colouring $\Delta:\N^{(2)}\twoheadrightarrow[k]$ with $k\ge\binom{n}{2}+1$ for some $n$, we can find $n$ nested subsets $A_1 \subsetneq A_2 \subsetneq \dots \subsetneq A_n$ of $\N$ such that $\Delta(A_{1}^{(2)})\subsetneq\Delta(A_{2}^{(2)})\subsetneq\dots\subsetneq\Delta(A_{n}^{(2)})$. To do this, we introduce the notion of $n$-homogeneity on which our first structural result, Theorem~\ref{mcol12-nhomog}, hinges. 

For an ordered $n$-tuple $\mathbf{X}=(X_{1},X_{2},\dots,X_{n})$, write $\widehat{X}_{i}$ for the set $X_{1}\cup X_{2}\dots\cup X_{i}$. Given a colouring $\Delta$, we call $\mathbf{X}=(X_{1},X_{2},\dots,X_{n})$, with each $X_{i}$ a nonempty subset of $\N$, $n$-\emph{homogeneous with respect to} $\Delta$ if the following conditions are met:

\begin{enumerate}
\item\label{mcol12-one} $X_{i}\cap X_{j}=\emptyset$ for $i\neq j$,
\item\label{mcol12-two} $X_{1}$ is infinite and $1$-coloured,
\item\label{mcol12-three} $\Delta(\widehat{X}_{1}^{(2)})\subsetneq\Delta(\widehat{X}_{2}^{(2)})\subsetneq\dots\subsetneq(\widehat{X}_{n}^{(2)})$,
\item\label{mcol12-four} for each $X_{i}$ with $2\le i\le n$, every $v\in X_{i}$ satisfies
\[
N(v,\widehat{X}_{i-1})=\Delta\left(\widehat{X}_{i}^{(2)}\right)\setminus\Delta\left(\widehat{X}_{i-1}^{(2)}\right)\mbox{, and}
\]
\item\label{mcol12-five} $\gamma(\widehat{X}_{n}) \le \binom{n}{2}+1$.
\end{enumerate}

Rather than proving Theorem~\ref{mcol12-mainresult}, we prove the following stronger statement.

\begin{theorem}
\label{mcol12-nhomog}
Let $\Delta:\N^{(2)}\twoheadrightarrow[k]$ be a $k$-colouring and suppose $n$ is a natural number such that $k\ge\binom{n}{2}+1$. Then there exists an $n$-homogeneous tuple with respect to $\Delta$.
\end{theorem}

Before we prove Theorem~\ref{mcol12-nhomog}, let us first recall the lexicographic order on $\N^r$: we say that $(a_{1},a_{2}\dots,a_{r})<(b_{1},b_{2}\dots,b_{r})$ if for some $l\le r-1$ we have $a_{i}=b_{i}$ for $1\le i\le l$ and $a_{l+1}<b_{l+1}$. 

Note that if $\mathbf{X}=(X_{1},X_{2},\dots,X_{n})$ is $n$-homogeneous, then by condition (\ref{mcol12-four}), the set $N(v,\widehat{X}_{i-1})$ is identical for all $v\in X_{i}$ for $2\le i\le n$. For $n\ge2$, define the \emph{rank} of an $n$-homogeneous tuple $\mathbf{X}$ to be the $(n-1)$-tuple $(x_{1},x_{2},\dots,x_{n-1})$, where $x_{i}$ is the number of new colours from any vertex of $X_{i+1}$ into the set $\widehat{X}_{i}$. Note that the rank of an $n$-homogeneous tuple is an $(n-1)$-tuple of natural numbers, so we can compare ranks using the lexicographic order on $\N^{n-1}$.

\begin{proof}[Proof of Theorem~\ref{mcol12-nhomog}]
We proceed by induction on $n$. The case $n=1$ is Ramsey's Theorem. Suppose that $k\ge\binom{n+1}{2}+1$ and assume inductively that at least one $n$-homogeneous tuple exists. 

From the set of all $n$-homogeneous tuples, pick one with minimal rank in the lexicographic order, say $\mathbf{X}=(X_{1},X_{2},\dots,X_{n})$. If $n=1$, the rank is immaterial; it suffices to pick $\mathbf{X}=(X_{1})$ such that $X_{1}$ is an infinite $1$-coloured set. We shall build an $(n+1)$-homogeneous tuple from $\mathbf{X}$. 

Note that $k\ge\binom{n+1}{2}+1>\binom{n}{2}+1$. Since $\Delta$ is surjective and attains at most $\binom{n}{2}+1$ different values inside $\widehat{X}_{n}$, it is clear that $\N\setminus\widehat{X}_{n}\neq\emptyset$. We consider two cases. 

\textbf{Case 1: $N(v,\widehat{X}_{n})\neq\emptyset$ for some $v\in\N\setminus\widehat{X}_{n}$.} If $|N(v,\widehat{X}_{n})|\le n$, then it is easy to check that $(X_{1},X_{2},\dots,X_{n},\{v\})$ is an $(n+1)$-homogeneous tuple and we are done. So, assume without loss of generality that $|N(v,\widehat{X}_{n})|\ge n+1$.

Let $j$ be the smallest index such that $N(v,\widehat{X}_{j})\neq\emptyset$. Since $N(v,\widehat{X}_{n})\neq\emptyset$, this minimal index $j$ exists. We now build our $(n+1)$-homogeneous tuple $\mathbf{Y}=(Y_{1},Y_{2},\dots,Y_{n+1})$ as follows. 

Set $Y_{1}=X_{1},Y_{2}=X_{2},\dots,Y_{j-1}=X_{j-1}$. We define $Y_{j}$ as follows. First, choose $c\in N(v,\widehat{X}_{j})$; note that by the minimality of $j$, $N(v,\widehat{X}_{j-1})=\emptyset$, so all the edges between $v$ and $\widehat{X}_{j}$ coloured $c$ are actually edges between $v$ and $X_{j}$. Take $Y_{j}\subset X_{j}$ to be the (nonempty) set of vertices $u\in X_{j}$ such that the edge between $v$ and $u$ is either coloured $c$ or with a colour from $\Delta(\widehat{X}_{j}^{(2)})$ (and hence a colour not in $N(v,\widehat{X}_{j})$). Note that if $j=1$, we can always choose $c$ such that $Y_{1}$ is an infinite subset of $X_{1}$.

Next, set $Y_{j+1}=\{v\}$. Now, note that the only colour from $\Delta(\widehat{Y}_{j+1}^{(2)})$ that might possibly occur in $N(v,\widehat{X}_{n})$ is $c$. So we can now choose $v_{1},v_{2},\dots,v_{n-j}$ from $X_{n}\cup X_{n-1}\dots\cup X_{j+1}\cup(X_{j}\setminus Y_{j})$ such that these $n-j$ vertices are joined to $v$ by edges which are all coloured by distinct elements of $N(v,\widehat{X}_{n})\setminus\{c\}$. Set $Y_{j+2}=\{v_{1}\},Y_{j+3}=\{v_{2}\},\dots,Y_{n+1}=\{v_{n-j}\}$. 

We claim that $\mathbf{Y}$ is an $(n+1)$-homogeneous tuple. Indeed, conditions (\ref{mcol12-one}) and (\ref{mcol12-two}) are obviously satisfied. 

To check condition (\ref{mcol12-three}), first note that $\Delta(\widehat{Y}_{1}^{(2)})\subsetneq\Delta(\widehat{Y}_{2}^{(2)})\subsetneq\dots\subsetneq\Delta(\widehat{Y}_{j-1}^{(2)})$ follows from the $n$-homogeneity of $\mathbf{X}$ since $\widehat{Y}_{i} = \widehat{X}_{i}$ for $1\le i \le j-1$. Also, $\Delta(\widehat{Y}_{j-1}^{(2)})\subsetneq\Delta(\widehat{Y}_{j}^{(2)})$ since ${Y}_{j} \subset {X}_{j}$. Next, $\Delta(\widehat{Y}_{j}^{(2)})\subsetneq\Delta(\widehat{Y}_{j+1}^{(2)})$ since $v$ is joined to at least one vertex of $Y_{j}$ by an edge coloured with $c$ and we know that $c$ is a new colour from $v$ into $\widehat{Y}_{j}$. Finally, $\Delta(\widehat{Y}_{j+1}^{(2)})\subsetneq\Delta(\widehat{Y}_{j+2}^{(2)})\subsetneq\dots\subsetneq\Delta(\widehat{Y}_{n+1}^{(2)})$ because the vertices $v_{1},v_{2},\dots,v_{n-j}$ are all joined to $v$ by edges of distinct colours and none of these colours belong to $\Delta(\widehat{X}_{n}^{(2)})$. So condition (\ref{mcol12-three}) is also satisfied. 

Condition (\ref{mcol12-four}) for each of $Y_{1},Y_{2},\dots,Y_{j}$ is equivalent to the same condition for $X_{1},X_{2},\dots,X_{j}$ respectively. Furthermore, condition (\ref{mcol12-four}) is also satisfied by each of $Y_{j+1},Y_{j+2},\dots,Y_{n+1}$ since they each contain exactly one vertex. 

Finally, we check condition (\ref{mcol12-five}). Clearly, $\Delta(\widehat{Y}_{n+1}^{(2)})$ is a subset of $\Delta(\widehat{X}_{n}^{(2)})\cup\, T$ for some subset $T$ of $N(v,\widehat{X}_{n})$ of size at most $n$. Hence, we see that $\gamma(\widehat{Y}_{n+1})\le\binom{n}{2}+1+n=\binom{n+1}{2}+1$. 

\textbf{Case 2: $N(v,\widehat{X}_{n})=\emptyset$ for every $v\in\N\setminus\widehat{X}_{n}$.} 
It is here that we use the fact that $\mathbf{X}$ has minimal lexicographic rank. To deal with this case, we will need the following lemma.

\begin{lemma}
\label{mcol12-lexlem}
Let $\mathbf{X}$ be an $n$-homogeneous tuple of minimal lexicographic rank and suppose $N(v,\widehat{X}_{n})=\emptyset$ for some $v\in\N\setminus\widehat{X}_{n}$. Then there is an $n$-homogeneous tuple $\mathbf{Y}$ such that $Y_{j}=X_{j}\cup\{v\}$ for some $j\in[n]$, and $Y_{i}=X_{i}$ for each $1\le i\le n$ with $i\neq j$.
\end{lemma}

\begin{proof}
If $N(v,\widehat{X}_{i})=\emptyset$ for each $1\le i\le n$, then $(X_{1}\cup\{v\},X_{2},\dots,X_{n})$ is $n$-homogeneous and we have $\mathbf{Y}$ as required. Hence, let $ j<n$ be the largest index such that $N(v,\widehat{X}_{j})\neq\emptyset$. So by the definition of $j$, $N(v,\widehat{X}_{i})=\emptyset$ for $j<i\le n$. We claim that $\mathbf{Y}=(X_{1},X_{2},\dots,X_{j},X_{j+1}\cup\{v\},X_{j+2},\dots,X_{n})$ is $n$-homogeneous. 

Consider a colour $c$ that belongs to $N(v,\widehat{X}_{j})$. Since $N(v,\widehat{X}_{j+1})=\emptyset$, this means that $c$ must occur in $\Delta(\widehat{X}_{j+1}^{(2)})\setminus\Delta(\widehat{X}_{j}^{(2)})$. But, by condition (\ref{mcol12-four}), for each $u\in X_{j+1}$, $N(u,\widehat{X}_{j})=\Delta(\widehat{X}_{j+1}^{(2)})\setminus\Delta(\widehat{X}_{j}^{(2)})$. Hence, $N(v,\widehat{X}_{j})\subset N(u,\widehat{X}_{j})$ for $u\in X_{j+1}$. 

Observe that since $N(v,\widehat{X}_{i})=\emptyset$ for $j<i\le n$, $N(u, \widehat{X}_{i-1})=N(u,\widehat{X}_{i-1}\cup\{v\})$ for each $u\in X_{i}$ with $j+1<i\le n$. From this, it is easy to see that $\mathbf{Y}$ is $n$-homogeneous if $N(v,\widehat{X}_{j})=N(u,\widehat{X}_{j})$ for $u\in X_{j+1}$.

So suppose that $N(v,\widehat{X}_{j})\subsetneq N(u,\widehat{X}_{j})$ for $u\in X_{j+1}$. Consider then the $n$-tuple $\mathbf{Z}=(X_{1},X_{2},\dots,X_{j},\{v\},X_{j+1},X_{j+2},\dots,X_{n-1})$. We claim that $\mathbf{Z}$ is $n$-homogeneous and has strictly smaller lexicographic rank than $\mathbf{X}$, which is a contradiction. 

We first check the $n$-homogeneity of $\mathbf{Z}$. Clearly, conditions (\ref{mcol12-one}) and (\ref{mcol12-two}) are satisfied by $\mathbf{Z}$. 

To check condition (\ref{mcol12-three}), first note that $\Delta(\widehat{Z}_{1}^{(2)})\subsetneq\Delta(\widehat{Z}_{2}^{(2)})\subsetneq\dots\subsetneq\Delta(\widehat{Z}_{j+1}^{(2)})$ follows from the $n$-homogeneity of $\mathbf{X}$ and the fact that $N(v,\widehat{X}_{j})\neq\emptyset$. Next, $\Delta(\widehat{Z}_{j+1}^{(2)})\subsetneq\Delta(\widehat{Z}_{j+2}^{(2)})$ since $N(v,\widehat{X}_{j})\subsetneq N(u,\widehat{X}_{j})$ for $u\in X_{j+1}$. Finally, we have $\Delta(\widehat{Z}_{j+2}^{(2)})\subsetneq\Delta(\widehat{Z}_{j+3}^{(2)})\subsetneq\dots\subsetneq\Delta(\widehat{Z}_{n}^{(2)})$
since we know that $N(u,\widehat{X}_{i-1}\cup\{v\})=N(u,\widehat{X}_{i-1})\neq\emptyset$ for each $u\in X_{i}$ with $j+1<i\le n$. So $\mathbf{Z}$ satisfies condition (\ref{mcol12-three}).

Condition (\ref{mcol12-four}) is satisfied trivially by each of $Z{}_{1},Z_{2},\dots,Z_{j}$. Condition (\ref{mcol12-four}) holds for $Z_{j+1}$ since $v$ is the only element in $Z_{j+1}$. We know that $N(v,\widehat{X}_{j+1})=\emptyset$. Hence, condition (\ref{mcol12-four}) holds for $Z_{j+2}$ since for any vertex $u\in Z_{j+2}=X_{j+1}$, we see that $N(u,\widehat{Z}_{j+1})=N(u,\widehat{X}_{j})\setminus N(v, \widehat{X}_{j})=\Delta(\widehat{Z}_{j+2}^{(2)})\setminus\Delta(\widehat{Z}_{j+1}^{(2)})$. Finally, condition (\ref{mcol12-four}) holds for each $Z_{i}$ with $j+2<i\le n$ by the fact that $N(u,\widehat{X}_{i-1}\cup\{v\})=N(u,\widehat{X}_{i-1})$ for each $u\in X_{i}$. 

Finally, it is easy to see that condition (\ref{mcol12-five}) holds since $N(v,\widehat{X}_{n})=\emptyset$. 

That $\mathbf{Z}$ has smaller lexicographic rank than $\mathbf{X}$ is clear from the fact that $N(v,\widehat{X}_{j})\subsetneq N(u,\widehat{X}_{j})$ for $u\in X_{j+1}$.
\end{proof}

We have assumed that $N(v,\widehat{X}_{n})=\emptyset$ for each $v\in\N\setminus\widehat{X}_{n}$. Now, $\Delta$ is surjective, so there must exist two vertices $v_{1}$ and $v_{2}$ in $\N\setminus\widehat{X}_{n}$ such that the edge joining $v_{1}$ and $v_{2}$ is coloured with a colour $c$ not in $\Delta(\widehat{X}_{n}^{(2)})$. 

Let $\mathbf{Y}$ be the $n$-homogeneous tuple that we get by applying Lemma~\ref{mcol12-lexlem} to $\mathbf{X}$ and $v_{1}$. It is then clear that $N(v_{2},\widehat{Y}_{n})=\{c\}$. Thus, $(Y_{1},Y_{2},\dots,Y_{n},\{v_{2}\})$ is an $(n+1)$-homogeneous tuple. This completes the proof of the theorem.
\end{proof}

\subsection{Proof of Theorem~\ref{mcol12-canonical}}

As we mentioned earlier, the proof of Theorem~\ref{mcol12-canonical} is very similar to that of Theorem~\ref{mcol12-mainresult} and also goes via a stronger structural result. We only highlight the main differences. 

To prove Theorem~\ref{mcol12-canonical}, we will need to alter the definition of $n$-homogeneity slightly. We shall relax condition (\ref{mcol12-two}): instead of demanding that our first set $X_{1}$ be infinite and $1$-coloured, we shall only require that $|X_{1}|=1$. 

More precisely, given a colouring $\Delta$, we call an $n$-tuple $\mathbf{X}=(X_{1},X_{2},\dots,X_{n})$, with each $X_{i}$ a nonempty subset of $\N$, \emph{weakly homogeneous with respect to} $\Delta$ if the following conditions are met:

\begin{enumerate}
\item $X_{i}\cap X_{j}=\emptyset$ for $i\neq j$,
\item $|X_{1}|=1$,
\item $\emptyset=\Delta(\widehat{X}_{1}^{(2)})\subsetneq\Delta(\widehat{X}_{2}^{(2)})\subsetneq\dots\subsetneq\Delta(\widehat{X}_{n}^{(2)})$,
\item for each $X_{i}$ with $2\le i\le n$, every $v\in X_{i}$ satisfies
\[
N(v,\widehat{X}_{i-1})=\Delta\left(\widehat{X}_{i}^{(2)}\right)\setminus\Delta\left(\widehat{X}_{i-1}^{(2)}\right)\mbox{, and}
\]
\item$\gamma(\widehat{X}_{n}) \le \binom{n}{2}$.
\end{enumerate}
Theorem~\ref{mcol12-canonical} is an easy consequence of the following stronger statement.

\begin{theorem}
\label{mcol12-nwhomog}
Let $\Delta:\N^{(2)}\twoheadrightarrow\N$ be an infinite-colouring and suppose $n\ge2$ is a natural number. Then there exists a weakly homogeneous $n$-tuple with respect to $\Delta$.\qed
\end{theorem}

The proof is essentially identical to that of Theorem~\ref{mcol12-nhomog}. Note that we only use the finiteness of the set of colours in two places in the proof of Theorem~\ref{mcol12-nhomog}. First, to produce an infinite $1$-coloured set for the base case of the induction and second, to ensure that the subset $Y_{1}$ of $X_{1}$ that we construct in the inductive step (in Case 1) is infinite. The definition of weak homogeneity gets around both these difficulties.

\section{Upper bounds}\label{mcol12-s:upper}

Erd\H{o}s proved in~\citep{Erdos1955} that for a natural number $n$, the set $P_n=\lbrace ab : a,b \le n \rbrace$ has size $o(n^2)$. We base the proof of Theorem~\ref{mcol12-upperbound} on the observation that $P_n$ is exactly the set of sizes of all induced subgraphs of a complete bipartite graph between two equal vertex classes of size $n$.

Let $H(x,y,z)$ be the number of natural numbers $n\le x$ having a divisor in the interval $(y, z]$. Tenenbaum~\citep{Tenenbaum1984} showed that 
\begin{equation}\label{mcol12-Tenenbaum}
H(x,y,z) = (1+ o(1))x \mbox{ if }\log{y}=o(\log{z}), z\le\sqrt{x}.
\end{equation}
Ford~\citep{Ford2008} proved that
\begin{equation}\label{mcol12-Ford}
H(x,y,2y) = \Theta \left( \frac{x}{(\log{y})^{\delta}(\log{\log{y}})^{3/2}}\right)  \mbox{ if } 3\le y \le \sqrt{x}, 
\end{equation}
where $\delta = 1 - \frac{1 + \log{\log{2}} }{\log{2}}$. Armed with these two facts, we can now prove Theorem~\ref{mcol12-upperbound}.

\begin{proof}[Proof of Theorem~\ref{mcol12-upperbound}]
We shall take
\[ A = \lbrace k : \exists\, a,b\in \N \mbox{ with }  k-1 = ab \mbox{ and}\, \log{k} \le a \le b\rbrace.\]
It follows from~\eqref{mcol12-Tenenbaum} that $H(x,\log{x},\sqrt{x})=(1+ o(1))x$; as an easy consequence, $A$ has asymptotic density one. Now, for a fixed $k\in A$ with $k-1 = ab$, consider a surjective $k$-colouring $\Delta$ of the complete graph on $\N$ which colours all the edges of the complete bipartite graph between $[a]$ and $[b+a]\setminus[a]$ with $ab$ distinct colours and all the other edges with the one  colour not used so far. It is easy to then see that 
\[\C{F}_\Delta = \lbrace a'b' + 1 : 1\le a' \le a, 1\le b'\le b \rbrace \cup \lbrace 1 \rbrace .\]

Now, for any element $a'b'+1 \in \C{F}_\Delta$, note that $a/2^{i+1} < a' \le a/2^i$ for some $i\ge 0$, so $a'b' \le ab/2^i$. Thus,
\[ |\C{F}_\Delta| \le 1 + \sum_{i\ge 0}{H\left(\frac{ab}{2^i},\frac{a}{2^{i+1}},\frac{a}{2^i}\right)}.\]
Using Ford's estimate~\eqref{mcol12-Ford} for $H(x,y,2y)$ and the fact that $a\ge \log{k}$, we obtain that
\[\psi(k) = O \left(\frac{k}{(\log{\log{k}})^{\delta}(\log{\log{\log{k}}})^{3/2}}\right)\]
for all $k \in A$.
\end{proof}

\section{Conclusion}\label{mcol12-s:conc}
Our results raise many questions that we cannot yet answer. We suspect that something much stronger than Corollary~\ref{mcol12-powersof2} is true.

\begin{conjecture}
\label{mcol12-triangconj}
Let $\Delta:\N^{(2)}\twoheadrightarrow[k]$ be a $k$-colouring and suppose $n\ge2$ is a natural number such that $k\ge\binom{n}{2}+2$. Then $\C{F}_{\Delta}\cap([\binom{n+1}{2}+1]\setminus[\binom{n}{2}+1])\neq\emptyset$. 
\end{conjecture}

If true, note that this statement would imply Theorem~\ref{mcol12-mainresult}. When $n=2$, the conjecture is implied by Corollary~\ref{mcol12-powersof2}. We are able to prove the first nontrivial instance of Conjecture~\ref{mcol12-triangconj}, namely that when $k\ge5$, $\C{F}_{\Delta}\cap\{5,6,7\}\neq\emptyset$, but the proof we possess sheds no light on how to prove the conjecture in general.

We strongly suspect that the function $\psi$ is quite far from being monotone. We have shown that $\psi(\binom{n}{2}+1)=n$ and $\psi(\binom{n+1}{2}+1)=n+1$, and it is an easy consequence of our results that $\psi(\binom{n}{2}+2)=n+1$. It appears to be true that even $\psi(\binom{n}{2}+3)$ is much bigger than $n$, though we cannot even prove much more than the fact that $\psi(\binom{n}{2}+3)>n+1$.

\begin{conjecture}
There is an absolute constant $c>0$ such that $\psi(\binom{n}{2}+3)>(1+c)n$ for all natural numbers $n\ge2$.
\end{conjecture}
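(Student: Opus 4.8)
The plan is to prove the equivalent statement that every surjective colouring $\Delta\colon\mathbb{N}^{(2)}\twoheadrightarrow\left[{n\choose2}+3\right]$ has $|\mathcal{F}_\Delta|\geq(1+\epsilon)n$, by marrying the structural machinery behind Theorem~\ref{nhomog} to a \emph{stability} analysis of the extremal small-rainbow colouring. Fix such a $\Delta$ and use Theorem~\ref{nhomog} to pick an $n$-homogeneous tuple $\mathbf{X}=(X_1,\dots,X_n)$ of minimal lexicographic rank; write $m_i=\bigl|\Delta\bigl(\bar{X}_i^{(2)}\bigr)\bigr|$ and $x_i=m_{i+1}-m_i\geq 1$, so that $1=m_1<\cdots<m_n\leq{n\choose2}+1$, and set $M=m_n$. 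These $n$ values already lie in $\mathcal{F}_\Delta$, and since $k={n\choose2}+3>M$ at least two colours are missing from $\Delta\bigl(\bar{X}_n^{(2)}\bigr)$; the goal is to convert this surplus into $\epsilon n$ further elements of $\mathcal{F}_\Delta$.

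The heart of the argument would be a rigidity dichotomy. The small-rainbow colouring is pinned down by $M={n\choose2}+1$ together with $x_i=i$ for every $i$; I would try to show that if $x_i\neq i$ for at least $\epsilon n$ of the indices $i$ -- equivalently, if the ``defect'' ${n\choose2}-(M-1)$ and the number of oversized increments together are $\Omega(n)$ -- then $|\mathcal{F}_\Delta|\geq(1+\epsilon)n$ directly. An index with $x_i<i$ forces the chain $m_1<\cdots<m_n$ to start out tightly packed, and one may then iterate the Case~1 extension of Theorem~\ref{nhomog} (legitimately, precisely because $M$ is then well below ${n'\choose2}+1$ for the larger lengths $n'$ that arise) to harvest additional achievable sizes; an index with $x_i>i$ lets one pigeonhole the new colours from $X_{i+1}$ into $\bar{X}_i$ into several distinct nested sizes. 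The work here is bookkeeping: charging each deviation to a distinct new element of $\mathcal{F}_\Delta$ without double-counting.

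In the remaining ``rigid'' case we have $x_i=i$ for all but $o(\epsilon n)$ indices, so $\Delta$ restricted to $\bar{X}_n$ is, up to a bounded perturbation, the small-rainbow colouring on an $n$-clique using $\approx{n\choose2}+1$ colours, and at least two genuinely new colours $\alpha,\beta$ appear only on edges meeting $\mathbb{N}\setminus\bar{X}_n$. One then argues as in the following model computation: for each $j$ in a linear-length range one builds an infinite set out of $j$ vertices of the rainbow clique, the ``rest'' colour class, and one edge coloured $\alpha$, obtaining a set of size $\approx{j\choose2}+2$; since no number of the form ${j\choose2}+2$ with $j\geq2$ is of the form ${i\choose2}+1$, each such $j$ contributes a \emph{new} element to $\mathcal{F}_\Delta$, which already gives $\Omega(n)$ of them -- in fact essentially $2n$ -- so that $|\mathcal{F}_\Delta|\geq(1+\epsilon)n$ in this case too.

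The main obstacle is the rigidity dichotomy, and in particular ruling out ``near-extremal'' colourings that are \emph{not} perturbations of the small-rainbow colouring -- for instance ones built from an almost-rainbow clique on $n+c$ vertices, or from complete bipartite pieces in the spirit of Section~3. What makes the small-rainbow colouring extremal is the arithmetic sparsity of the set $\bigl\{{j\choose2}+1:j\geq1\bigr\}$, and this is delicate enough that a robust proof would seem to require an honest classification of all colourings that come within $o(n)$ of the minimum of $|\mathcal{F}_\Delta|$. Obtaining that classification is exactly what the authors could not do, which is why even the much weaker $\psi\!\left({n\choose2}+3\right)>n+1$ is left open here.
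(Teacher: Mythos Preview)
The statement you are addressing is a \emph{conjecture}; the paper does not prove it, and indeed explicitly says that even the much weaker inequality $\psi\bigl({n\choose2}+3\bigr)>n+1$ is open. So there is no ``paper's own proof'' to compare against, and your final paragraph already identifies this correctly. What you have written is a heuristic outline of a stability approach, not a proof, and you say so yourself.

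As an outline it has a genuine gap beyond the one you flag. In the ``far'' branch of your dichotomy you assert that an index with $x_i<i$ lets one ``iterate the Case~1 extension of Theorem~\ref{nhomog}'' to harvest further elements of $\mathcal{F}_\Delta$, and that an index with $x_i>i$ lets one ``pigeonhole the new colours\ldots into several distinct nested sizes''. Neither of these mechanisms actually produces new achievable sizes in general: a small increment $x_i$ constrains the \emph{values} $m_1,\dots,m_n$ but says nothing about whether intermediate values are realised by infinite sets, and a large increment $x_i$ is a single jump in $|\Delta(\bar{X}_i^{(2)})|$, which Condition~4 of $n$-homogeneity does not let you subdivide (every $v\in X_{i+1}$ contributes the \emph{same} set of new colours, so no pigeonholing is available). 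Likewise, in the ``rigid'' branch your model computation assumes one can freely attach an $\alpha$-edge to $j$ rainbow vertices plus the rest-class and obtain exactly ${j\choose2}+2$ colours; but in a colouring that is only approximately small-rainbow the edges between $\alpha$'s endpoints and the clique need not be the rest colour, so the count can overshoot by anything up to $2j$, collapsing the ``new'' values back onto the old chain. These are not bookkeeping issues but the core difficulty: without the classification of near-extremal colourings that you (and the paper) identify as missing, neither branch of the dichotomy goes through.
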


The problem of determining $\psi$ completely is of course still open. We do not know the answer to even the following question.

\begin{problem}
Is $\psi(k)=o(k)$ for all $k\in\N$? 
\end{problem}

If we restrict our attention to colourings which use every colour but one exactly once, we are led to the following question about induced subgraphs, a positive answer to which would immediately imply that $\psi(k)=o(k)$ for all $k\in\N$. To state the question, we need some definitions: let $S(G)$ denote the set of sizes of all the induced subgraphs of a graph $G$ and let $S(m)$ be the minimum value of $|S(G)|$ taken over all graphs $G$ with $m$ edges.
\begin{problem}
Is $S(m)=o(m)$? 
\end{problem}

\section*{Acknowledgements} I would like to thank my supervisor B\'{e}la Bollob\'{a}s for bringing the question considered in this paper to my attention and for his many helpful comments about the presentation.

\bibliographystyle{amsplain}
\bibliography{m_col}

\end{document}